\def\tbcaption{\def\@captype{table}\caption}
\newtheorem{theorem}{Theorem}[section]
\newtheorem{lemma}[theorem]{Lemma}
\theoremstyle{definition}
\newtheorem{definition}[theorem]{Definition}
\newtheorem*{lemma*}{Lemma}
\theoremstyle{remark}
\numberwithin{equation}{section}
\newcommand{\beq}[1]{\begin{equation}\label{#1}}
\newcommand{\eeq}{\end{equation}}
\newcommand{\bZ}{\ensuremath{\mathbb{Z}}}
\begin{document}

\title[$P_n$ as mixed $b$-concatenation of $Q_m$ and $P_k$
]{\small On Mixed $b$-concatenations of Pell and Pell-Lucas Numbers which are Pell Numbers}

\author{Kou\`essi Norbert Ad\'edji$^1$}
\thanks{$^1$ Corresponding author.}
\address{Institut de Math\'ematiques et de Sciences Physiques, Universit\'e D'Abomey-Calavi, Bénin}
\email{adedjnorb1988@gmail.com}

\author{Marija {Bliznac Trebje\v{s}anin}}
\address{University of Split, Faculty of Science, Ru\dj{}era Bo\v{s}kovi\'{c}a 33,
21000 Split, Croatia}
\email{marbli@pmfst.hr}

\begin{abstract}
Let $(P_n)_{n\ge 0}$ and $(Q_n )_{n\ge 0}$ be the Pell and Pell-Lucas sequences. Let $b$ be a positive integer such that $b\ge 2.$ In this paper, we prove that the following two Diophantine equations $P_{n}=b^{d}P_{m}+Q_{k}$ and  $P_{n}=b^{d}Q_{m}+P_{k}$ with $d,$ the number of digits of $P_k$ or $Q_k$ in base $b,$ have only finitely many solutions in nonnegative integers $(m, n, k, b, d)$. Also, we explicitly determine these solutions in cases $2 \le  b \le 10.$
\end{abstract}

\maketitle 

%\subjclass[2010]{11B39, 11J86.}
\noindent{\it 2020 {Mathematics Subject Classification:}} 11B39, 11J68, 11J86.

\noindent{\it Keywords}: Pell numbers, Pell-Lucas number, $b$-concatenations, linear forms in logarithms, reduction method.

\section{Introduction}
Let ${P_n}$ denote the Pell sequence defined by the recurrence 
$$P_0=0,\quad P_1=1,\quad P_n=2P_{n-1}+P_{n-2},\ n\geq 2.$$
Also, let ${Q_n}$ denote the Pell-Lucas sequence defined by the recurrence 
$$Q_0=2,\quad Q_1=2,\quad Q_n=2Q_{n-1}+Q_{n-2},\ n\geq 2.$$
Many properties of these sequences are well known. Let $\alpha=1+\sqrt{2}$ and $\beta=1-\sqrt{2}$. Obviously, $2<\alpha<3$ and $-1<\beta<0$. The explicit Binet's formulas for ${P_n}$ and
${Q_n}$ are as follows
\begin{equation}
    \label{eqn:binet} P_n=\frac{\alpha^n-\beta^n}{2\sqrt{2}},\quad Q_n=\alpha^n+\beta^n,\quad \textrm{for all }n\geq 0.
\end{equation}
For $n\geq 1$, the inequalities
\begin{equation}
    \label{eq:inequalities_basic}
    \alpha^{n-2}\leq P_n\leq \alpha^{n-1}\quad \textrm{and }\quad \alpha^{n-2}\leq Q_n<\alpha^{n+1}
\end{equation}
hold. 

\begin{definition}\label{def}
Let $b\geq 2$ be an integer. Let $N$ be a positive integer  and suppose $N$ can be written as 
$$N=a_1\cdot b^d+a_2,$$
where $a_1$ and $a_2$ are positive integers and $d$ is the number of digits of $a_2$ in base $b$. Then we will call the number $N$ a $b$-concatenation of $a_1$ and $a_2$.
\end{definition}
In this paper we will consider two Diophantine equations
\begin{align}\label{eq:first}
 P_n&=b^dP_m+Q_k
\end{align}
and
\begin{align}\label{eq:second}
 P_n&=b^dQ_m+P_k,
\end{align}
and search for integer solutions $(b,n,m,k)$, such that $b\geq 2$, $n,m,k\geq 0$,  where $d$ is defined as in Definition~\ref{def}. The motivation for this work arises from the recent work of the authors of \cite{nat1}, \cite{nat}, \cite{Alan:2022}, \cite{mixed} and \cite{Banks:2005}. The main results are the following two theorems:
\begin{theorem}\label{tm:first}
Let $b\geq 2$ be an integer. Then, the Diophantine equation
$$
P_n=b^dP_m+Q_k
$$
has only finitely many solutions in nonegative integers $n,m,k$ and $d$ is the number of digits of $Q_k$ in base $b$.
Namely, we have $n<5.746\cdot 10^{27}\log^3 b.$
\end{theorem}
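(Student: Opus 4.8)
The plan is to bound $n$ from above by converting the equation into a linear form in logarithms and applying a Baker-type estimate, specifically the Matveev bound. First I would use the Binet formulas \eqref{eqn:binet} together with the size estimates \eqref{eq:inequalities_basic} to pin down the relative sizes of $n$, $m$, $k$, and $d$. Since $P_n = b^d P_m + Q_k \geq b^d P_m$, the dominant term on the right forces $\alpha^n$ to be comparable to $b^d \alpha^m$, so I expect a relation of the shape $n \approx m + d\log b/\log\alpha$ up to bounded error. Crucially, because $d$ is the number of base-$b$ digits of $Q_k$, we have $b^{d-1} \le Q_k < b^d$, which combined with $\alpha^{k-2}\le Q_k < \alpha^{k+1}$ lets me express $d\log b$ in terms of $k\log\alpha$ up to $O(\log b)$. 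This is the key bookkeeping step that reduces the number of free large parameters.

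Next I would isolate a linear form in logarithms. Writing everything through Binet and multiplying out, the equation $P_n - b^d P_m = Q_k$ becomes, after dividing by the dominant term $\alpha^n/(2\sqrt2)$, an expression of the form
\begin{equation}
\Lambda = \left| 1 - \frac{2\sqrt2\, b^d}{\alpha^{\,n-m}}\cdot(\text{unit factor}) \right|
\end{equation}
which is small — roughly of size $\alpha^{k-n}$ or $\alpha^{m-n}$ depending on which term dominates the error. The quantity $\Lambda$ can be written as $|\,b^d \alpha^{-(n-m)}\gamma - 1\,|$ for an algebraic number $\gamma \in \Q(\sqrt2)$ coming from the Binet expansions, hence as a genuine linear form $\Lambda = |\exp(d\log b - (n-m)\log\alpha + \log\gamma) - 1|$ in three logarithms of algebraic numbers. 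To rule out the degenerate case $\Lambda = 0$ I would argue that a vanishing linear form would force an exact identity $b^d P_m = P_n$ (or a similar relation), which is impossible for the stated ranges by a parity or size contradiction, so the Matveev lower bound applies.

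The core of the argument is then to play the Matveev lower bound for $\log\Lambda$ against the analytic upper bound $\log\Lambda \ll -(n-k)\log\alpha$ (or $-(n-m)\log\alpha$). The heights of the three algebraic numbers are controlled: $h(\alpha)=\tfrac12\log\alpha$, $h(b)=\log b$, and $h(\gamma)$ is bounded by a constant times $\log b$ after clearing denominators, while the integer coefficients $d$ and $n-m$ are each $O(n)$. Matveev's theorem therefore gives a lower bound of the form $\log\Lambda \gg -C(\log b)\log\alpha\cdot \log(n)$ with an explicit constant $C$ of size around $10^{12}$ for three logarithms in the real quadratic field. Comparing with the upper bound yields $n \ll (\log b)\,\log n$, and absorbing the logarithmic factor in the standard way produces the claimed polynomial bound $n < 5.746\cdot 10^{27}\log^3 b$; the cube of $\log b$ arises because the height contributions of $b$ and $\gamma$ each carry a factor $\log b$ and a third factor enters through the dependence of $d$ on $\log b$.

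The main obstacle I anticipate is the careful handling of the parameter $d$ and its interaction with $b$, since $d$ is not a free variable but is coupled to $k$ and $b$ through the digit-count relation $b^{d-1}\le Q_k<b^d$; a single naive application of Matveev would leave three large parameters ($n$, $d$, $k$) rather than one, and the trick is to show that two of them are essentially determined by the third so that the final bound is genuinely on $n$ alone. A secondary technical point is tracking which of the two subdominant terms ($b^d P_m$ relative to $Q_k$, or the $\beta$-powers in the Binet formula) governs the smallness of $\Lambda$ in different parameter regimes, which may require splitting into cases and applying the linear-forms estimate more than once to extract the cleanest dependence on $\log b$.
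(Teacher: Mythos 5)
Your overall strategy (Binet, digit-count relation, linear forms in logarithms, absorb the $\log n$ factor) is the right family of ideas, but the central quantitative step as written does not go through. You propose a single application of the Matveev-type bound to a form $\Lambda=|b^d\alpha^{-(n-m)}\gamma-1|$ with $h(\gamma)\ll\log b$, concluding $n\ll(\log b)\log n$. Neither half of this is available. If you take the two-logarithm form $|1-b^d\alpha^{-(n-m)}|$ (no extra $\gamma$), the error terms coming from $Q_k$ make the analytic upper bound only of size $\alpha^{-(n-k)}$, not $\alpha^{-n}$, so you bound $n-k$ rather than $n$. If instead you absorb $P_m$ and the $\alpha^{k-n}$ correction into a third algebraic number $\gamma$ so that the form is genuinely of size $\alpha^{-n}$, then $h(\gamma)$ is \emph{not} $O(\log b)$: it grows linearly in $m$ and in $n-k$, which a priori are of size $n$, and a single application then only yields the useless $n\ll n\log b\log n$. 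This is exactly why the paper runs the argument in two stages: first a two-logarithm form $|1-b^d\alpha^{-(n-m)}|<1.1b\,\alpha^{-(n-k-2)}$ giving $n-k\ll\log b\cdot\log n$, and then a three-logarithm form with $\gamma_3=P_m/(1/(2\sqrt2)-\alpha^{k-n})$, whose height is controlled by the \emph{already bounded} quantity $n-k$ (using $m-1<n-k+2$), giving $n\ll(\log b)^2(\log n)^2$ and hence $n\ll(\log b)^3$ after the standard absorption. You do remark at the end that one "may require \dots applying the linear-forms estimate more than once," but this is presented as a contingency rather than as the actual mechanism, and your stated accounting for the exponent $3$ (one $\log b$ from $h(b)$, one from $h(\gamma)$, one from $d$) is not the correct bookkeeping: the two factors of $\log b$ come from the two successive applications, and the third comes from bounding $(\log\log b)^2$ by a constant times $\log b$ at the very end. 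Note also that your own mechanism, if it worked, would give $n\ll\log b\cdot\log\log b$, which is inconsistent with the $\log^3 b$ you claim to derive from it.

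Two smaller points. The nonvanishing of $\Lambda$ is not a parity issue; the clean argument is to suppose $\alpha^{n-m}=b^d$ (or the analogous identity for the three-logarithm form), apply the nontrivial automorphism of $\Q(\sqrt2)$, and derive a size or norm contradiction. And you will also need the elementary inequalities $n>m+k-3$ and $n-k\ge 2$ before the analytic steps, both to control $h(\gamma_3)$ by $n-k$ and to keep the denominator $1/(2\sqrt2)-\alpha^{k-n}$ bounded away from $0$.
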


\begin{theorem}\label{tm:second}
Let $b\geq 2$ be an integer. Then, the Diophantine equation
$$P_n=b^dQ_m+P_k$$
has only finitely many solutions in nonegative integers $n,m,k$ and $d$ is the number of digits in $P_k$ in base $b$.
Namely, we have $n<4.7\cdot 10^{30}\log^3 b.$
\end{theorem}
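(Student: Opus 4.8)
The plan is to prove Theorem~\ref{tm:second} by the standard Baker--Davenport machinery for exponential Diophantine equations, following the same architecture one would use for Theorem~\ref{tm:first}. Rewriting \eqref{eq:second} via the Binet formulas \eqref{eqn:binet} and using the bounds \eqref{eq:inequalities_basic}, I would first establish a rough size relation between the exponents. Since $b^d Q_m \le P_n$ and $b^{d-1}\le P_k$, comparing powers of $\alpha$ gives that $n$ is comparable to $d\log b/\log\alpha + m$ up to bounded error, and in particular $d\log b$, $m$, and $k$ are each $O(n)$. This lets me treat $b^d = (b^{d-1})\cdot b$ as essentially $\alpha^{d\log b/\log\alpha}$ and reduce everything to a competition among the three exponents $n$, $m$ (weighted by the $d\log b$ term), and $k$.

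The core of the argument is a \emph{first} application of a lower bound for linear forms in logarithms (Matveev's theorem). The plan is to isolate the dominant term: write
\begin{equation*}
\frac{\alpha^n}{2\sqrt 2}-b^d\alpha^m=\frac{\beta^n}{2\sqrt 2}-b^d\beta^m+P_k,
\end{equation*}
bound the right-hand side by roughly $\alpha^k$ (using $P_k\le\alpha^{k-1}$ and that the $\beta$-terms are tiny), divide by the leading term, and obtain that
\begin{equation*}
\Lambda_1=n\log\alpha-m\log\alpha-d\log b-\log(2\sqrt 2)
\end{equation*}
is extremely small, of size $O(\alpha^{k-n})$. Matveev's bound applied to $\Lambda_1$, with the three algebraic numbers $\alpha$, $b$ and $2\sqrt2$ (heights controlled in terms of $\log b$), then forces $n-k$ to be bounded by a constant times $\log b$ times a polynomial in $\log n$. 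The degree-three power of $\log b$ in the final bound $n<4.7\cdot10^{30}\log^3 b$ signals that logarithmic-form estimates will be invoked three times (or that the height of the $b$-term contributes $\log b$ at each of three stages), so I anticipate a chain of such reductions, each peeling off the relation between a pair of exponents.

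After the first pass controls $n-k$, the plan is to substitute back and run a \emph{second} (and, guided by the $\log^3$, a \emph{third}) linear form in logarithms to separately bound $n-m$ and the quantity $d\log b$ itself against each remaining free exponent, each time extracting a bound of the shape $(\text{const})\cdot\log b\cdot(\log n)^{?}$. Combining these yields an inequality of the form $n<C\cdot(\log b)^3(\log n)^{s}$ for explicit $C$ and small $s$; a routine analytic lemma of the type ``if $x<A(\log x)^s$ then $x<2A(\log(2A))^s$'' then converts this into the clean bound $n<4.7\cdot10^{30}\log^3 b$ stated in the theorem, establishing finiteness.

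The hard part will be the \emph{bookkeeping of the height of the base $b$} throughout the logarithmic-form estimates: because $d$ and $b$ both vary and $d$ is tied to $b$ through $\mathrm{ord}$ of digits of $P_k$ ($b^{d-1}\le P_k<b^d$), one must carefully express $d$ in terms of $k$ and $\log b$ and track how $h(b)=\log b$ enters Matveev's bound at each application so that the powers of $\log b$ aggregate to exactly the cube in the final constant. A secondary difficulty is handling the degenerate cases where one of the linear forms vanishes identically (forcing an algebraic relation among $\alpha$, $b$, $2\sqrt2$), which must be ruled out by the irrationality and multiplicative independence of these quantities before Matveev's nonvanishing hypothesis can be applied.
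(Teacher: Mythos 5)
Your first stage matches the paper: the linear form you call $\Lambda_1$ is exactly the paper's
$\Lambda_3=1-2\sqrt{2}\,b^{d}\alpha^{-(n-m)}$, obtained by isolating $\alpha^n/(2\sqrt2)-b^d\alpha^m$ and bounding the remainder by a constant times $\alpha^{k}$; Matveev's theorem with $\gamma_1=b$, $\gamma_2=2\sqrt2$, $\gamma_3=\alpha$ then gives $n-k\ll \log b\,(1+\log n)$, and the nonvanishing is ruled out by taking norms in $\bQ(\sqrt2)$, just as you anticipate. The inequalities $m+k-3<n<m+k+3+\log_\alpha(b+1)$ and $P_k<b^d\le bP_k$ that you sketch are also the paper's starting point.

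The gap is in your second stage. You plan a second and a third application of Matveev aimed at ``separately bounding $n-m$ and $d\log b$,'' guided by the heuristic that $\log^3 b$ means three applications. That heuristic is wrong here, and the targets you name do not close the argument. The paper uses only \emph{two} applications of Matveev. The second one is applied to
$\Lambda_4=1-\dfrac{2\sqrt2\, b^d Q_m}{\alpha^n\left(1-\alpha^{k-n}\right)}$,
obtained by moving $P_k$ to the left-hand side of \eqref{eq:second} and treating $\gamma_3=2\sqrt2\,Q_m/(1-\alpha^{k-n})$ as a single algebraic number; the essential mechanism --- absent from your plan --- is that $h(\gamma_3)$ is bounded by a linear function of $m$ and $n-k$, and since $m+1<(n-k)+4$ by \eqref{Equ3}, the first stage's bound on $n-k$ bounds $A_3$, so the second Matveev application yields $n\ll\log^2 b\,(1+\log n)^2$ directly. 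The third power of $\log b$ does not come from a third linear form: it appears only at the very end, when Lemma~\ref{lemma:supporting} gives $n<4H(\log H)^2$ with $H\propto\log^2 b$ and the factor $(60.7+2\log\log b)^2$ is absorbed into a further $73^2\log b$. Without identifying the specific form of $\Lambda_4$ and the feedback of the $n-k$ bound into its height, your chain of reductions does not terminate in a bound on $n$ itself, so the plan as written does not complete the proof.
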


\medskip

For the proofs of the Theorems~\ref{tm:first} and \ref{tm:second} we will follow the ideas of \cite{nat1} and \cite{mixed}. In fact, the proofs use Baker's theory of lower bounds for nonzero linear forms in logarithms of algebraic numbers and the reduction method due to Baker and
Davenport, the version of Bravo, G\'omez and Luca.

Some supporting results are as follows. We start with the height of an algebraic number. Let $\eta$ be an algebraic number of degree $t,$ let $a_0 \ne 0$ be the leading coefficient of its minimal polynomial over $\bZ$ and let $\eta=\eta^{(1)},\ldots,\eta^{(t)}$ denote its conjugates. The logarithmic height of $\eta$ is defined by
\[
 h(\eta)= \frac{1}{t}\left(\log |a_0|+\sum_{j=1}^{t}\log\max\left(1,\left|\eta^{(j)} \right| \right) \right).
\]
$\bullet$ If $\eta_1$ and $\eta_2$ are algebraic numbers and $j\in \bZ,$ then we have
\begin{align*}
 h(\eta_1 \pm \eta_2) &\leq h(\eta_1)+ h(\eta_2) +\log2,\\
h(\eta_1\eta_2^{\pm1}) &\leq h(\eta_1) + h(\eta_2),\\
h(\eta_1^j)&=|j|h(\eta_1).
\end{align*}
$\bullet$ If $p$ and $q$ are integers such that $q>0$ and $\gcd (p, q)=1,$ then taking $\eta=p/q$ the above definition reduces to $h(\eta)=\log(\max\{|p|,q\}).$ 

\begin{theorem}{(Theorem~9.4 of \cite{BMS:2006})}\label{tm:BMS}
Let $\gamma_1,\dots,\gamma_s$ be real algebraic numbers and let $b_1,\dots,b_s$ be nonzero integers. Let $D$ be the degree of the number field $\mathbb{Q}(\gamma_1,\dots,\gamma_s)$ over $\mathbb{Q}$ and let $A_j$ be a positive real number satisfying 
$$A_j=\max\{Dh(\gamma_j),|\log\gamma_j|,0.16\},\quad \textrm{for }j=1,\dots,s. $$
Assume that 
$$B\geq \max\{|b_1|,\dots,|b_s|\}.$$
If $\Lambda:=\gamma_1^{b_1}\cdots\gamma_s^{b_s}-1\neq 0$, then
\begin{equation}\label{ineq:BMS}
|\Lambda|\geq \exp(-1.4\cdot30^{s+3}\cdot s^{4.5}\cdot D^2(1+\log D)(1+\log B)A_1\cdots A_s).
\end{equation}
\end{theorem}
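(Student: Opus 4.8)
This statement is Matveev's theorem on linear forms in logarithms, quoted verbatim from \cite{BMS:2006}; the present paper does not prove it but invokes it as an external tool, so strictly there is nothing for the authors to prove here. If one nonetheless wished to establish such an explicit lower bound from scratch, the only known route is Baker's method from transcendence theory, in the sharpened quantitative form due to Matveev. The strategy I would follow is a proof by contradiction: assume $|\Lambda|$ is smaller than the asserted bound, construct an auxiliary analytic object, and then contradict its existence, thereby forcing \eqref{ineq:BMS}.

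First I would build an auxiliary function, or in the more efficient modern variant an interpolation determinant. Using Siegel's lemma I would produce a nonzero polynomial $P$ in $s$ variables, with integer coefficients of controlled size, so that the associated exponential function $\Phi(z)=P(\gamma_1^{z},\dots,\gamma_s^{z})$ --- specialized along the line through $(b_1,\dots,b_s)$ --- vanishes to a prescribed high order at a chosen family of integer points. The degrees of $P$, the order of vanishing, and the number of points are free parameters, and it is the optimization of their product against the quantities $A_1\cdots A_s$ and $1+\log B$ that ultimately dictates the shape of the bound.

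Next I would carry out the analytic extrapolation. The hypothesis that $\Lambda$ is extremely small forces $\gamma_1^{b_1}\cdots\gamma_s^{b_s}$ to lie very close to $1$, so $\Phi$ and its derivatives are very small at the interpolation points. A maximum-modulus estimate (Schwarz's lemma) on a large disc then propagates this smallness, showing that $\Phi$ nearly vanishes at far more points, or to far higher order, than its construction alone would guarantee. Balancing the analytic growth of $\Phi$ against this metric smallness is the technical core, and it is here that the explicit factors $1.4\cdot 30^{s+3}$, $s^{4.5}$, and $D^2(1+\log D)$ are accumulated.

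Finally I would invoke a zero estimate, or multiplicity estimate, on the algebraic group $\mathbb{G}_m^s$ (of Philippon--Masser type): a nonzero polynomial of the prescribed degrees cannot vanish to the order that extrapolation has just forced on it. This contradiction eliminates the assumption that $|\Lambda|$ was too small. The hardest part is not this qualitative skeleton, which is classical, but the relentless bookkeeping needed to keep every constant explicit and as small as possible; indeed the whole advantage of the quoted theorem over Baker's original estimates lies precisely in these refined constants, which Matveev obtained through an especially economical choice of auxiliary construction.
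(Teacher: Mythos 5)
You are right that the paper offers no proof of this statement: it is quoted verbatim as Theorem~9.4 of Bugeaud--Mignotte--Siksek, which is in turn Matveev's theorem on linear forms in logarithms, and the authors use it purely as an external tool. Your recognition of this is the correct answer, and your sketch (auxiliary function or interpolation determinant via Siegel's lemma, extrapolation by a Schwarz-lemma argument from the assumed smallness of $\Lambda$, and a Philippon--Masser type zero estimate on $\mathbb{G}_m^s$ to reach a contradiction) is an accurate outline of the standard proof strategy, so there is no discrepancy with the paper to report.
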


\begin{lemma}{(Lemma~7 of \cite{SGL})}\label{lemma:supporting}
If $l\geq 1$, $H>(4l^2)^l$ and $H>L/(\log L)^l$, then
$$L<2^lH(\log H)^l.$$
\end{lemma}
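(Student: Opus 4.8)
The plan is to reduce the entire statement to the single scalar inequality $\log L < 2\log H$. Indeed, rewriting the hypothesis $H > L/(\log L)^l$ as $L < H(\log L)^l$, I only need to bound $\log L$ by $2\log H$: once $\log L < 2\log H$ is known, monotonicity of $t\mapsto t^l$ (legitimate since $H>(4l^2)^l\ge 4$ forces $\log H>0$, and likewise $\log L>0$) gives $(\log L)^l < (2\log H)^l = 2^l(\log H)^l$, whence $L < H(\log L)^l < 2^l H(\log H)^l$, which is exactly the desired conclusion. So the whole burden of the proof falls on showing $\log L < 2\log H$, equivalently $L < H^2$.

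I would establish $\log L < 2\log H$ by contradiction, assuming $\log L \ge 2\log H$ and using the hypotheses to trap $H$ below $(4l^2)^l$. First, feeding $L\ge H^2$ into $L<H(\log L)^l$ yields $H^2 < H(\log L)^l$, so that $H^{1/l}\le \log L$; this is the inequality that will ultimately produce the contradiction. Second, taking logarithms in $L<H(\log L)^l$ gives $\log L < \log H + l\log\log L$, and substituting the assumed bound $\log H \le \tfrac12\log L$ collapses this to $\tfrac12\log L < l\log\log L$, i.e. $\log L < 2l\log\log L$.

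The crucial step is then the elementary claim: if $u := \log L$ satisfies $u < 2l\log u$ with $l\ge 1$, then $u < 4l^2$. I would prove this by studying $g(u)=u-2l\log u$: its derivative $1-2l/u$ is positive for $u>2l$, so $g$ is increasing there, and a direct check shows $g(4l^2)=2l\bigl(2l-\log(4l^2)\bigr)>0$ because $2l>\log 4 + 2\log l$ for every $l\ge 1$ (the inequality holds at $l=1$, and the left-minus-right side has nonnegative derivative $2-2/l$). Since $u\ge 2\log H > 2l\log(4l^2)>2l$, the assumption $u<2l\log u$ says $g(u)<0$, which forces $u<4l^2$. Combining with $H^{1/l}\le \log L = u < 4l^2$ gives $H<(4l^2)^l$, contradicting the hypothesis $H>(4l^2)^l$. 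This contradiction establishes $\log L<2\log H$ and completes the argument.

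The main obstacle is precisely the scalar implication $u<2l\log u \Rightarrow u<4l^2$: this is where the constant $4l^2$ in the hypothesis and the factor $2^l$ in the conclusion are calibrated against one another, and it needs the monotonicity/positivity bookkeeping above rather than a one-line estimate. Everything else is routine manipulation of logarithmic inequalities, provided one keeps track of the sign conditions $\log H>0$ and $\log L>0$, both guaranteed by $H>(4l^2)^l\ge 4>1$.
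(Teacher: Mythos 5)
Your argument is correct. Note that the paper does not prove this statement at all: it is imported verbatim as Lemma~7 of the cited work of Guzm\'an and Luca, so there is no in-paper proof to compare against. Your reduction to the single inequality $\log L<2\log H$, the contradiction scheme ($L\ge H^2$ forces $H^{1/l}<\log L$ and $\log L<2l\log\log L$, hence $\log L<4l^2$, hence $H<(4l^2)^l$), and the monotonicity check $g(u)=u-2l\log u$ increasing past $u=2l$ with $g(4l^2)=2l\,(2l-\log(4l^2))>0$ all hold up; in particular $u\ge 2\log H>2l\log(4l^2)>2l$ places $u$ in the region where $g$ is increasing, so $g(u)<0$ indeed forces $u<4l^2$. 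The standard proof in the source runs slightly differently: one assumes $L\ge 2^lH(\log H)^l$, uses that $x\mapsto x/(\log x)^l$ is increasing for $x>e^l$, and shows $\log\bigl(2^lH(\log H)^l\bigr)=l\log 2+\log H+l\log\log H<2\log H$ when $H>(4l^2)^l$, whence $L/(\log L)^l\ge H$, contradicting the hypothesis. Both routes pivot on the same calibration of $(4l^2)^l$ against the factor $2^l$; yours trades the monotonicity of $x/(\log x)^l$ for the elementary scalar implication $u<2l\log u\Rightarrow u<4l^2$, which is arguably more self-contained. One cosmetic inaccuracy: $\log L>0$ is not ``guaranteed by $H>(4l^2)^l$'' as you write --- the hypotheses on $H$ say nothing about the size of $L$ --- but the lemma is vacuous or trivial for $L\le 1$ (and the hypothesis $H>L/(\log L)^l$ only makes sense for $L>1$), so this does not affect the substance.
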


The following is a slight variation of the result of Dujella and Peth\H{o} (see
\cite[Lemma 5a]{Dujella-Peto}), which in turn is a generalization of the result of Baker and Davenport \cite{bd}. In fact, we use the improved version of Bravo, G\'omez and Luca (see \cite[Lemma~1]{BGF}). For a real number $x,$ we write $\left\Vert x\right\Vert$ for the distance from $x$ to the nearest integer.
\begin{lemma}\label{lemma:duj_pet}
Let $M$ be a positive integer, let $p/q$ be a convergent of the continued fraction of the irrational $\tau$ such that $q>6M$, and let $A,B,\mu$ be some real numbers with $A>0$ and $B>1$. Let 
$$\varepsilon=\|\mu q\|-M\cdot \|\tau q\|.$$
If $\varepsilon>0$, then there is no solution to the inequality
$$0<|m\tau -n+\mu|<AB^{-w},$$
in positive integers $m,n$ and $k$ with 
$$m\leq M\quad\textrm{and}\quad w\geq \frac{\log(Aq/\varepsilon)}{\log B}.$$
\end{lemma}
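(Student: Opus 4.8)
The plan is to argue by contradiction: I will assume that the displayed inequality does admit a solution in positive integers $m,n,w$ obeying $m\le M$ and $w\ge \log(Aq/\varepsilon)/\log B$, and then derive a violation of the stated lower bound on $w$. The guiding idea is to clear the denominator by multiplying through by $q$ and then to exploit the fact that, for a convergent $p/q$ of $\tau$, the integer $p$ is the nearest integer to $q\tau$, so that $|q\tau-p|=\|q\tau\|$ is extremely small.

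First I would record the two facts about convergents that drive the estimate. Since $p/q$ is a convergent of the irrational $\tau$, one has $\|q\tau\|=|q\tau-p|<1/q$. Combined with the hypothesis $q>6M$, this yields $M\|q\tau\|<M/q<1/6<1/2$, which in particular confirms that $p$ is genuinely the integer closest to $q\tau$ and that the quantity $\varepsilon=\|\mu q\|-M\|q\tau\|$ is being compared against a truly small error term. I then set $\delta:=q\tau-p$, so that $|\delta|=\|q\tau\|$.

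The main computation is the following. Multiplying the assumed inequality $0<|m\tau-n+\mu|<AB^{-w}$ by $q$ and writing $mq\tau=mp+m\delta$, I obtain
$$|mq\tau-nq+\mu q|=\bigl|(mp-nq)+(m\delta+\mu q)\bigr|<qAB^{-w}.$$
Because $mp-nq$ is an integer, the left-hand side is at least the distance from $m\delta+\mu q$ to the nearest integer, and I can peel off $m\delta$ using the triangle inequality for the nearest-integer function:
$$|mq\tau-nq+\mu q|\ge \|m\delta+\mu q\|\ge \|\mu q\|-\|m\delta\|\ge \|\mu q\|-m\,\|q\tau\|\ge \|\mu q\|-M\|q\tau\|=\varepsilon,$$
where I have used $\|m\delta\|\le |m\delta|=m\,\|q\tau\|\le M\|q\tau\|$, valid since $m\le M$. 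As $\varepsilon>0$, comparing this lower bound with the upper bound $qAB^{-w}$ gives $\varepsilon<qAB^{-w}$, hence $B^{w}<Aq/\varepsilon$ and finally $w<\log(Aq/\varepsilon)/\log B$, contradicting the hypothesis $w\ge \log(Aq/\varepsilon)/\log B$.

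I expect the only delicate points to be bookkeeping rather than depth. The two places that require care are applying the reverse triangle inequality for $\|\cdot\|$ in the correct direction, so that the subtracted term genuinely carries the factor $m\le M$ rather than some larger quantity, and invoking $q>6M$ at exactly the right moment, namely to guarantee $|q\tau-p|=\|q\tau\|<1/q$ and thereby to ensure the error term $M\|q\tau\|$ subtracted in the definition of $\varepsilon$ stays dominated. No linear-forms machinery enters here; the statement is a purely elementary continued-fraction estimate, and the whole proof is short once the convergent property is in hand.
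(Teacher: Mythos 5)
Your proof is correct and is the standard argument for this reduction lemma; the paper itself gives no proof, quoting it as Lemma~1 of Bravo--G\'omez--Luca (a variant of Dujella--Peth\H{o}), whose proof is precisely the multiply-by-$q$, nearest-integer argument you give, with the chain $\varepsilon\le\|m\delta+\mu q\|\le|mq\tau-nq+\mu q|<qAB^{-w}$ forcing $w<\log(Aq/\varepsilon)/\log B$. Two cosmetic remarks: the hypothesis $q>6M$ is never actually consumed by your contradiction (it only makes $\varepsilon>0$ attainable in practice, and the nearest-integer property $\|q\tau\|=|q\tau-p|$ already follows from $p/q$ being a convergent), and the ``$k$'' in the statement is a typo for $w$.
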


The following criterion of Legendre will also be useful for the proof of our claims.
\begin{lemma}{(Lemma~2.4 of \cite{AFT:2023})}\label{lemma:legendre}
Let $\kappa$ be a real number and $x,y$ integers such that 
$$\left|\kappa-\frac{x}{y} \right|<\frac{1}{2y^2}.$$
Then $x/y=p_k/q_k$ is a convergent of $\kappa$. Furthermore, let $M$ and $N$ be nonnegative integers such that $q_N>M$. Put $a(M):=\max\{a_i: i=0,1,2,\dots,N\}$. Then the inequality
$$\left|\kappa-\frac{x}{y} \right|\geq \frac{1}{(a(M)+2)y^2}$$
holds for all pairs $(x,y)$ of positive integers with $0<y<M$.
\end{lemma}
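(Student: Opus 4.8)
The plan is to treat the two assertions separately, since the first is the classical theorem of Legendre on continued fractions while the second is a quantitative refinement built directly on it.

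For the first assertion, I would reduce to the case $\gcd(x,y)=1$ and write $\kappa-\tfrac{x}{y}=\tfrac{\theta}{y^2}$ with $|\theta|<\tfrac12$. I would take the finite continued fraction expansion of the rational $x/y$, say $x/y=[a_0;a_1,\dots,a_n]=p_n/q_n$, and exploit the fact that every rational admits exactly two such expansions, of lengths of opposite parity; this lets me choose $n$ so that $(-1)^n$ has the same sign as $\theta$. I would then define $\omega$ by $\kappa=\dfrac{\omega p_n+p_{n-1}}{\omega q_n+q_{n-1}}$. Using the determinant identity $p_nq_{n-1}-p_{n-1}q_n=(-1)^{n-1}$, a short computation gives $\kappa-\tfrac{p_n}{q_n}=\dfrac{(-1)^n}{q_n(\omega q_n+q_{n-1})}$, and comparing with $\theta/q_n^2$ yields $\omega=\dfrac{(-1)^n}{\theta}-\dfrac{q_{n-1}}{q_n}$. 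Since the parity choice makes $(-1)^n/\theta=1/|\theta|>2$, while $0\le q_{n-1}/q_n\le 1$, I obtain $\omega>1$. Hence $\kappa=[a_0;a_1,\dots,a_n,\omega]$ with $\omega>1$, and expanding $\omega$ into its own continued fraction exhibits $x/y=p_n/q_n$ as a genuine convergent of $\kappa$.

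For the second assertion, suppose toward a contradiction that some pair $(x,y)$ with $0<y<M$ satisfies $|\kappa-\tfrac{x}{y}|<\dfrac{1}{(a(M)+2)y^2}$. As $a(M)+2\ge 2$, this bound is at most $\tfrac{1}{2y^2}$, so the first part applies and $x/y=p_k/q_k$ is a convergent, with $y=q_k$ in lowest terms. From $q_k=y<M<q_N$ and the monotonicity of $(q_i)$ I would deduce $k<N$, hence $k+1\le N$ and therefore $a_{k+1}\le a(M)$. I would then invoke the exact approximation formula $|\kappa-\tfrac{p_k}{q_k}|=\dfrac{1}{q_k(\alpha_{k+1}q_k+q_{k-1})}$, where $\alpha_{k+1}=[a_{k+1};a_{k+2},\dots]$ is the complete quotient satisfying $\alpha_{k+1}<a_{k+1}+1$. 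Bounding $\alpha_{k+1}q_k+q_{k-1}<(a_{k+1}+1)q_k+q_k=(a_{k+1}+2)q_k$ gives $|\kappa-\tfrac{p_k}{q_k}|>\dfrac{1}{(a_{k+1}+2)q_k^2}\ge\dfrac{1}{(a(M)+2)y^2}$, contradicting the assumption and proving the inequality for all admissible $(x,y)$.

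The genuinely delicate step is the parity bookkeeping in the first part: one must verify that the two continued fraction representations of $x/y$ do have opposite parities and that the penultimate convergent denominators satisfy $q_{n-1}/q_n\le 1$ needed to force $\omega>1$, handling also the degenerate cases $y=1$ and small $n$ where $(q_i)$ fails to be strictly increasing. Once this is pinned down, the remaining identities are routine, and the second assertion follows almost formally from the first together with the monotonicity $q_k<q_N$, which is exactly what controls which partial quotient $a_{k+1}$ can occur.
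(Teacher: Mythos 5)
Your argument is correct. Note that the paper does not prove this lemma at all: it is quoted verbatim as Lemma~2.4 of \cite{AFT:2023}, so there is no in-paper proof to compare against. What you give is the standard proof: the first half is Legendre's classical criterion via the parity trick ($x/y=[a_0;a_1,\dots,a_n]$ with $n$ chosen so that $(-1)^n$ matches the sign of $\theta$, then $\omega=(-1)^n/\theta-q_{n-1}/q_n>1$ exhibits $p_n/q_n$ as a convergent), and the second half follows from the exact formula $\left|\kappa-p_k/q_k\right|=1/\bigl(q_k(\alpha_{k+1}q_k+q_{k-1})\bigr)$ together with $\alpha_{k+1}<a_{k+1}+1$, $q_{k-1}\le q_k$ and $q_k\le y<M<q_N$, which forces $a_{k+1}\le a(M)$. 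Two small points worth pinning down: (i) if $\kappa=x/y$ exactly ($\theta=0$) the first claim is trivial but falls outside your sign argument, so it should be handled separately; (ii) in the second part, if $\gcd(x,y)>1$ you only get $y\ge q_k$ rather than $y=q_k$, but the chain $\left|\kappa-x/y\right|=\left|\kappa-p_k/q_k\right|>1/\bigl((a(M)+2)q_k^2\bigr)\ge 1/\bigl((a(M)+2)y^2\bigr)$ still closes the argument, exactly as you indicate. With those caveats the proof is complete.
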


\section{The first equation $P_n=b^dP_m+Q_k$}
\subsection{Proof of Theorem~\ref{tm:first}} 

First, observe that for $m=0$ we have $P_m=0$, so the equation \eqref{eq:first} has the form
$$P_n=Q_k$$
which has only two trivial solutions $P_2=Q_0$ and $P_2=Q_1$ (it is proved in Section 4.2 in \cite{nat1}). Hence, for any base $b\geq 2$, we only have two solutions when $m=0$. From now on we assume that $m\geq 1$.
As in \cite{nat1}, from inequalities \eqref{eq:inequalities_basic} we have,
\begin{equation}\label{ineq:d_bounds}
(k-2)\frac{\log \alpha}{\log b}<d<1+(k+1)\frac{\log \alpha}{\log b}.
\end{equation}
Also, 
\begin{equation}\label{ineq:b_Q_k}
    Q_k<b^d\leq b\cdot Q_k.
\end{equation}
Now, by observing these inequalities and inserting them in \eqref{eq:first} we have
\begin{align*}
    \alpha^{n-2}\leq P_n=b^dP_m+Q_k&\leq bQ_kP_m+Q_k\leq (b+1)Q_kP_m\\
    &<(b+1)\alpha^{k+1}\alpha^{m-1}=(b+1)\alpha^{k+m}\\
    &=\alpha^{\log_{\alpha}(b+1)+k+m}.
\end{align*}
Hence, we have an upper bound for $n$ in the terms of $m$, $k$ and $b$:
\begin{equation}\label{ineq:n_upper}
    n<m+k+2+\log_{\alpha}(b+1).
\end{equation}
On the other hand, from the inequality
$$\alpha^{n-1}\geq P_n>Q_kP_m+Q_k>Q_kP_m\geq \alpha^{k+m-4}$$
we obtain a lower bound for $n$, namely
\begin{equation}\label{ineq:n_lower}
 n>m+k-3.
\end{equation}

Note that $Q_k=P_{k+1}+P_{k-1}$ for $k\geq1$. Assuming that $m\geq 1$, we can derive the following using $P_m\geq 1$ and the inequality \eqref{ineq:b_Q_k}:
\begin{align*}
    P_n&=b^dP_m+Q_k\geq b^d+Q_k>2Q_k\\
    &=2(P_{k+1}+P_{k-1})> P_{k+1}.
\end{align*}
This implies $n>k+1$, i.e. 
\begin{equation}\label{ineq:n-kvj3}
    n-k\geq 2.
\end{equation}

Now, let's rewrite equation \eqref{eq:first} by using \eqref{eqn:binet}. We have
$$\frac{\alpha^n-\beta^n}{2\sqrt{2}}=\frac{\alpha^m}{2\sqrt{2}}b^d-\frac{\beta^m}{2\sqrt{2}}b^d+Q_k.$$
After rearranging and multiplying with $2\sqrt{2}/\alpha^n$ we get
\begin{equation*}
    1-\frac{b^d}{\alpha^{n-m}}=\frac{\beta^n}{\alpha^n}-\frac{b^d\beta^m}{\alpha^n}+\frac{Q_k2\sqrt{2}}{\alpha^n}.
\end{equation*}
Since $|\beta|=1/\alpha$ we have
\begin{equation*}
    \left|1-\frac{b^d}{\alpha^{n-m}}\right|\leq \frac{1}{\alpha^{2n}}+\frac{b^d}{\alpha^{n+m}}+\frac{Q_k2\sqrt{2}}{\alpha^n}.
\end{equation*}
Now we use \eqref{eq:inequalities_basic} and \eqref{ineq:b_Q_k} and get
\begin{align*}
    \left|1-\frac{b^d}{\alpha^{n-m}}\right|&\leq \frac{1}{\alpha^{2n}}+\frac{b\cdot Q_k}{\alpha^{n+m}}+\frac{Q_k2\sqrt{2}}{\alpha^n}\\
    &< \frac{1}{\alpha^{2n}}+\frac{b\cdot \alpha^{k+1}}{\alpha^{n+m}}+\frac{\alpha^{k+1}2\sqrt{2}}{\alpha^n}\\
    &< \frac{1}{\alpha^{2n}}+\frac{b\cdot \alpha}{\alpha^{n-k}}+\frac{\alpha2\sqrt{2}}{\alpha^{n-k}}\\
    &=\frac{1+b\cdot \alpha+2\sqrt{2}\alpha}{\alpha^{n-k}}.
\end{align*}
For $b\geq 3$ we have $1+b\cdot \alpha+2\sqrt{2}\alpha<b\alpha^2$, and for $b=2$ we have $1+b\cdot \alpha+2\sqrt{2}\alpha=1+b\alpha^2$, so for $b\geq 2$ we take
$$1+b\cdot \alpha+2\sqrt{2}\alpha<1.1b\cdot \alpha^2.$$
This implies 
\begin{equation}\label{ineq:first_aps_approx_form}
\left|1-\frac{b^d}{\alpha^{n-m}}\right|<\frac{1.1b}{\alpha^{n-k-2}}.
\end{equation}
In order to apply Theorem \ref{tm:BMS}, we define
\begin{equation}\label{eq:lambda_1}
|\Lambda_1|=|1-b^d\cdot \alpha^{-(n-m)}|.
\end{equation}
Then
$$\gamma_1=b,\quad \gamma_2=\alpha,\quad b_1=d,\quad b_2=-(n-m).$$
Notice that we have $\Lambda_1\neq 0$. If we assume the contrary, that $\Lambda_1=0$, we would have $\alpha^{n-m}=b^d$. By taking norm in $\mathbb{Q}(\sqrt{2})$, we get $b^{2d}=\pm 1$ which is impossible since $b\geq 2$. 
Moreover,  it is easy to show that $D=2$, $h(\gamma_1)=\log b$ and $h(\gamma_2)=\frac{1}{2}\log \alpha$. So we take
$$A_1=2\log b,\quad A_2=\log \alpha.$$
After dividing equation \eqref{eq:first} with $P_m$ we get
$$b^d<b^d+\frac{Q_k}{P_m}=\frac{P_n}{P_m}\leq \frac{\alpha^{n-1}}{\alpha^{m-2}}=\alpha^{n-m+1}.$$
This implies
\begin{equation}\label{ineq:upper_for_d}
d<(n-m+1)\frac{\log \alpha}{\log b}\leq (n-m+1)\frac{\log \alpha}{\log 2}<1.3(n-m+1).
\end{equation}
We now have
$$B=\max\{|b_1|,|b_2|\}=\max\{d,n-m\}<1.3(n-m+1).$$
Hence, we can take $B=1.3(n-m+1)$. Also, $s=2$, so Theorem \ref{tm:BMS} implies
$$|\Lambda_1|>\exp(-1.0427\cdot 10^{10}(1+\log(1.3(n-m+1)))\log b\cdot\log \alpha).$$
Combining this with \eqref{ineq:first_aps_approx_form} we get
\begin{equation}\label{eq:first_result_BMS}
n-k-2<1.043\cdot 10^{10}\log b\cdot (1+\log(1.3(n-m+1))). 
\end{equation}
Then, we can rewrite equation \eqref{eq:first} in the form 
$$\frac{\alpha^n-\beta^n}{2\sqrt{2}}=b^dP_m+\alpha^k+\beta^k.$$
After rearranging we have 
$$\alpha^n\left(\frac{1}{2\sqrt{2}}-\alpha^{k-n}\right)-b^dP_m=\frac{\beta^n}{2\sqrt{2}}+\beta^k,$$
and again we can get an upper bound for the absolute value of the right-hand side:
$$\left|\alpha^n\left(\frac{1}{2\sqrt{2}}-\alpha^{k-n}\right)-b^dP_m\right|<\frac{|\beta^n|}{2\sqrt{2}}+|\beta^k|=\frac{1}{2\sqrt{2}\alpha^n}+\frac{1}{\alpha^k }.$$
After dividing and some manipulations we get
\begin{equation}\label{ineq:second_aps_approx_form}
    \left|1-\frac{b^dP_m}{\alpha^n\left(\frac{1}{2\sqrt{2}}-\alpha^{k-n}\right)}\right|<\dfrac{\dfrac{1}{2\sqrt{2}\alpha^n}+\dfrac{1}{\alpha^k}}{\dfrac{1}{2\sqrt{2}}-\alpha^{k-n}}\cdot \dfrac{1}{\alpha^{n}}
    <\frac{5.885}{\alpha^{n}},
\end{equation}
since $n-k\geq 2$, $n>m\geq 1$ and $k\geq 0$.
We define
$$|\Lambda_2|=\left|1-\frac{b^dP_m}{\alpha^n\left(\frac{1}{2\sqrt{2}}-\alpha^{k-n}\right)}\right|,$$
and apply Theorem \ref{tm:BMS} with parameters
$$
\gamma_1=b,\quad \gamma_2=\alpha,\quad \gamma_3=\frac{P_m}{1/2\sqrt{2}-\alpha^{k-n}},
$$
$$b_1=d,\quad b_2=-n,\quad b_3=1.$$
Note that if $|\Lambda_2|=0,$ then
\begin{align}\label{g}
\alpha^n-2\sqrt{2}\alpha^k=2\sqrt{2}b^dP_m.
\end{align}
Moreover, the Galois group of $\mathbb{Q}(\alpha)/\mathbb{Q}$ is given by
$
\{id, \sigma\},
$
where we identify the automorphism $\sigma$ to the permutation $(\alpha \beta).$ Conjugating both sides of expression \eqref{g} with $\sigma$ we obtain
$$
\beta^n+2\sqrt{2}\beta^k=-2\sqrt{2}b^dP_m.
$$
From this equality and using the fact that $n>k$ and $\beta=-\alpha^{-1},$ we get 
$$
2\sqrt{2}b^d\le |\beta^n+2\sqrt{2}\beta^k|\le \dfrac{\alpha^k+2\sqrt{2}\alpha^n}{\alpha^{n+k}}<\dfrac{1+2\sqrt{2}}{\alpha^k},
$$
which is a contradiction because $k\ge 1$ and $b\ge 2$. It follows that $\Lambda_2\ne 0. $ Again, it is easy to show that all conditions of Theorem \ref{tm:BMS} are satisfied and $D=2$, $h(\gamma_1)=\log b$, $h(\gamma_2)=\frac{1}{2}\log \alpha.$ Observe that %similarly as in \cite{nat1}
\begin{align*}
h(\gamma_3)&\leq h(P_m)+h\left(\frac{1}{2\sqrt{2}}-\alpha^{k-n}\right) \\
&<(m-1)\log\alpha +\frac{1}{2}\log 8+\frac{n-k}{2}\log \alpha+\log 2. 
\end{align*}
From \eqref{ineq:n_lower} we have $m-1<n-k+2$, hence, 
$$h(\gamma_3)<\frac{3(n-k)+4}{2}\log \alpha+\frac{5}{2}\log 2.$$
We take 
\begin{align*}
    &A_1=2\log b,\quad A_2=\log \alpha,\\
    &A_3=(3(n-k)+4)\log \alpha+5\log 2.
\end{align*}
Also, $d<1.3(n-m+1)<1.3(n+1)$ so 
$$B\geq \max\{|b_1|,|b_2|,|b_3|\}=\max\{d,1,n\}<1.3(n+1)$$
and we can take $B=1.3(n+1)$. We also have $s=3$. 
With these parameters, Theorem \ref{tm:BMS} together with \eqref{ineq:second_aps_approx_form} yields
\begin{equation}\label{eq:second_result_BMS}
    n<1.94\cdot 10^{12}(1+\log(1.3(n+1)))[(3(n-k)+4)\log\alpha+5\log 2]\log b.
\end{equation}

From \eqref{eq:first_result_BMS} we have an estimate
\begin{align*}
    C&=(3(n-k)+4)\log\alpha+5\log 2=(3(n-k-2)+10)\log\alpha+5\log 2\\
    &<3.129\cdot 10^{10}(1+\log(1.3(n+1)))\log b\log \alpha+10\log\alpha+5\log 2\\
    &<2.8\cdot 10^{10}(1+\log(1.3(n+1)))\log b.
\end{align*}
Substituting this expression into \eqref{eq:second_result_BMS}, we get
$$n< 5.432\cdot 10^{22}(1+\log(1.3(n+1)))^2\log^2b.$$
 
For $n\geq 2$, we have $1+\log(1.3(n+1))<2.5\log(1.3n)$ so we can observe
$$n<3.4\cdot 10^{23}\log^2(1.3n)\log^2b.$$
We can apply Lemma \ref{lemma:supporting} to the previous inequality with parameters $L=1.3n$, $l=2$ and $H=4.42\cdot 10^{23}\log^2b$. Thus,
$$n<1.36\cdot 10^{24}\log^2b(54.45+2\log\log b)^2.$$
Since $54.45+2\log\log b<65\log^{1/2} b$ for $b\geq 2$, we have
$$n<5.746\cdot 10^{27}\log^3 b.$$
This completes the proof of Theorem \ref{tm:first}. 

\subsection{Application for $2\leq b\leq 10$}
We will now explicitly list solutions of the equation \eqref{eq:first} for some fixed $b$'s.
\begin{theorem}\label{Appl1}
Let $b$ be a positive integer such that $2\le b\le 10.$ Then the numbers $5, 12$ and $70$ are the only Pell numbers that satisfy the Diophantine equation \eqref{eq:first}. More precisely, we have the following representations:
\[
\begin{array}{ccccl}
5&=&P_3&=&3^1\cdot P_1+Q_0=3^1\cdot P_1+Q_1,\\
12&=&P_4&=&5^1\cdot P_2+Q_0=5^1\cdot P_2+Q_1,\\
12&=&P_4&=&10^1\cdot P_1+Q_0=10^1\cdot P_1+Q_1,\\
70&=&P_6&=&6^2\cdot P_1 +Q_4.
\end{array}
\]
\end{theorem}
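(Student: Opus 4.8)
\medskip

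The plan is to combine the absolute bound $n<5.746\cdot10^{27}\log^3 b$ of Theorem~\ref{tm:first} with the reduction method, treating each base $b\in\{2,\dots,10\}$ separately, and to finish with a direct search once $n$ has been pushed down to a searchable size. Throughout I would keep the two linear forms $\Lambda_1$ and $\Lambda_2$ already built in \eqref{ineq:first_aps_approx_form} and \eqref{ineq:second_aps_approx_form}.

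For fixed $b$, Theorem~\ref{tm:first} gives an explicit (though enormous) bound $n<N_0$, and then $d<1.3(n+1)<1.3(N_0+1)=:M$ by \eqref{ineq:upper_for_d}. I would run the first reduction on $\Lambda_1$. Passing to logarithms in \eqref{ineq:first_aps_approx_form} and using that $|1-e^{z}|<c<\tfrac12$ forces $|z|<2c$, one obtains
$$\left|\frac{\log b}{\log\alpha}-\frac{n-m}{d}\right|<\frac{c_1}{d\,\alpha^{n-k-2}}$$
with $c_1=2.2b/\log\alpha$. Since $\log b/\log\alpha$ is irrational (the same fact that gave $\Lambda_1\neq0$), either $\alpha^{n-k-2}\le 2c_1 d$, which is already a bound on $n-k$, or the right-hand side is below $1/(2d^2)$ and Lemma~\ref{lemma:legendre} applies: $(n-m)/d$ is then a convergent of $\log b/\log\alpha$, and the companion lower bound $|\log b/\log\alpha-(n-m)/d|\ge 1/((a(M)+2)d^2)$ yields $\alpha^{n-k-2}\le c_1(a(M)+2)d$, hence a modest bound $n-k\le R$. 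The Dujella--Peth\H{o} lemma is unavailable at this stage because the relevant $\mu$ vanishes; this is exactly why Legendre's criterion is needed.

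With $n-k\le R$ fixed, the inequalities \eqref{ineq:n_upper} and \eqref{ineq:n_lower} confine $m$ to the short window $n-k-2-\log_\alpha(b+1)<m<n-k+3$, so $n-k$ and $m$ now range over explicitly finite sets. I would then run the second reduction on $\Lambda_2$ for each admissible pair $(m,n-k)$: here $\gamma_3=P_m/\bigl(\tfrac{1}{2\sqrt2}-\alpha^{k-n}\bigr)$ is a fixed algebraic number, and taking logarithms in \eqref{ineq:second_aps_approx_form} gives
$$\left|d\,\frac{\log b}{\log\alpha}-n+\frac{\log\gamma_3}{\log\alpha}\right|<\frac{c_2}{\alpha^{n}},\qquad c_2=11.77/\log\alpha,$$
which is precisely the shape of Lemma~\ref{lemma:duj_pet} with $\tau=\log b/\log\alpha$, $\mu=\log\gamma_3/\log\alpha$, $A=c_2$, $B=\alpha$ and $w=n$. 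Choosing a convergent $q$ of $\tau$ with $q>6M$ and verifying $\varepsilon=\|\mu q\|-M\|\tau q\|>0$ forces $n<\log(Aq/\varepsilon)/\log\alpha$, a bound of modest size. Finally, with $n$ below a searchable threshold, a direct check over all $(b,n,m,k)$ with $2\le b\le10$, $0\le m,k\le n$ and $d$ equal to the number of base-$b$ digits of $Q_k$ recovers exactly the four listed representations.

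The main obstacle is bookkeeping rather than theory: one must confirm $\varepsilon>0$ for each of the finitely many pairs $(m,n-k)$ surviving the first reduction, replacing $q$ by the next convergent whenever $\varepsilon\le0$, and must separately dispose of the small-$n$ range in which the linearizations $|z|<2|\Lambda|$ are not yet valid (there one simply searches directly). The already-handled case $m=0$ and the boundary values of $k$, where $\gamma_3$ or the digit count $d$ behaves specially, likewise reduce to finite routine verifications.
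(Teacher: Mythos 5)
Your proposal is correct and follows essentially the same route as the paper: Legendre's criterion (Lemma~\ref{lemma:legendre}) applied to $\Lambda_1$ to obtain an absolute bound on $n-k$ (the paper phrases this step as proving $m\le 100$ by contradiction and then deducing $n-k\le 104$ from \eqref{ineq:n_upper}), followed by the Dujella--Peth\H{o} reduction (Lemma~\ref{lemma:duj_pet}) applied to $\Lambda_2$ for each admissible pair $(m,n-k)$, and a final computer search. The only cosmetic difference is that the paper feeds $n-k\le 104$ back into \eqref{eq:second_result_BMS} to shrink the bound on $n$ to about $5\cdot 10^{16}$ before the second reduction, which merely reduces the size of the convergents and the value of $M$ needed there.
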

\begin{proof}
First, we are investigating whether solutions exist for the equation \eqref{eq:first} when $m>100$.  So we assume $m\geq 101$, and we have
$96\leq m-5<n-k-2.$
Define
$$\Gamma_1:=d\log b-(n-m)\log \alpha.$$
Then from \eqref{ineq:first_aps_approx_form} we conclude
$$|\Lambda_1|=|e^{\Gamma_1}-1|<\frac{1.1b}{\alpha^{n-k-2}}<\frac{11}{\alpha^{96}}<\frac{1}{2}.$$
It is easy to see if $y<1/2$, then $|e^x-1|<y$ implies $|x|<2y$, hence
$$|\Gamma_1|<\frac{2.2b}{\alpha^{n-k-2}}.$$
After dividing with $ \log \alpha$, we observe
\begin{equation}\label{ineq:final_tm_1}
0<\left|d\frac{\log b}{\log \alpha}-(n-m)\right|<\frac{2.2b}{\log\alpha \cdot \alpha^{n-k-2}}.
\end{equation}
Using the fact that $b\leq 10$ and the upper bound for $n$ from Theorem \ref{tm:first} on \eqref{ineq:upper_for_d} we obtain an upper bound for $d$, namely
\begin{equation}\label{ineq:upper_d_numeric}
d<1.3(n-m+1)<1.3\cdot(5.746\cdot 10^{27}\log^3 10-100)<9.12\cdot 10^{28}.
\end{equation}
After dividing \eqref{ineq:final_tm_1} with $d$ we observe
\begin{equation}\label{ineq:final_tm_2}
\left|\frac{\log b}{\log \alpha}-\frac{n-m}{d}\right|<\frac{2.2b}{\log\alpha \cdot \alpha^{n-k-2}\cdot d}.
\end{equation}
Since $n-k-2> 96$, 
$$\frac{\log\alpha \cdot \alpha^{n-k-2}}{4.4b}>\frac{\log\alpha \cdot \alpha^{96}}{44}>1.11\cdot 10^{35}>d.$$
Hence,
$$\left|\frac{\log b}{\log \alpha}-\frac{n-m}{d}\right|<\frac{1}{2d^2}.$$

We see that we can apply Lemma \ref{lemma:legendre} for $\kappa=\frac{\log b}{\log \alpha}$ and $y=d$, $M=9.12\cdot 10^{28}.$
Let $q_t$ be the denominator of the $t$-th convergent of the continued fraction of $\kappa$. The application of Lemma \ref{lemma:legendre} yields the following data.
\begin{table}[h]
 \resizebox{\textwidth}{!}{%
\begin{tabular}{|c|c|c|c|c|c|c|c|c|c|c|}
    \hline
    $b$ & $2$& $3$& $4$&$5$&$6$&$7$&$8$&$9$&$10$  \\
    \hline
    $q_t$& $q_{56}$& $q_{59}$& $q_{66}$& $q_{55}$& $q_{43}$& $q_{58}$& $q_{56}$& $q_{53}$& $q_{67}$\\
    \hline
    $q_t>$& $10^{29}$&$ 10^{30}$&$2\cdot 10^{29}$&$ 10^{29}$&$6\cdot 10^{29}$&$ 10^{29}$&$10^{29}$&$7\cdot 10^{29}$&$3\cdot 10^{29}$\\
    \hline
    $a(M)$&$a_{28}$&$a_{27}$&$a_{59}$&$a_{17}$&$a_{9}$&$a_{8}$&$a_{25}$&$a_{5}$&$a_{24}$\\
    \hline
    $a(M)$&$100$&$130$&$110$&$163$&$509$&$33$&$34$&$68$&$52$\\
    \hline
\end{tabular}
}
 % \caption{}\label{tab:table_1}
\end{table}

In all these cases it holds
$$\left|\frac{\log b}{\log \alpha}-\frac{n-m}{d}\right|\geq \frac{1}{(509+2)d^2}.$$
When combined with \eqref{ineq:final_tm_2}, we obtain
$$d>\frac{\log \alpha\cdot \alpha^{n-k-2}}{2.2b\cdot 511}> \frac{\log \alpha\cdot \alpha^{96}}{22\cdot 511}>4.37\cdot 10^{32},$$
which is a contradiction with \eqref{ineq:upper_d_numeric}.

Now, it remains to observe $m\leq 100$.
From \eqref{ineq:n_upper} we have
$$n-k<m+2+\frac{\log(b+1)}{\log \alpha}\leq 102+\frac{\log 11}{\log \alpha}<104.7,$$
or more precisely, $n-k\leq 104.$
We substitute this upper bound for $n-k$ into the inequality \eqref{eq:second_result_BMS} and obtain
$$n< 1.268\cdot 10^{15}(1+\log(1.3(n+1))),$$
which implies $n< 5.037\cdot 10^{16}$.

Let's assume that $n\geq 3$. Define 
$$\Gamma_2=d\log b-n\log\alpha+\log\frac{P_m}{1/2\sqrt{2}-\alpha^{k-n}}.$$
Then from \eqref{ineq:second_aps_approx_form} we have
$$|\Lambda_2|=|e^{\Gamma_2}-1|<\frac{5.885}{\alpha^n}<\frac{1}{2}.$$
As before, we conclude
$$|\Gamma_2|<\frac{11.77}{\alpha^{n}}.$$ After dividing this inequality with $\log \alpha$, we observe
$$0<\left|d\frac{\log b}{\log \alpha}-n+\frac{\log\frac{P_m}{1/2\sqrt{2}-\alpha^{k-n}}}{\log\alpha}\right|<\frac{11.77/\log \alpha}{\alpha^{n}}.$$
We can now apply Lemma \ref{lemma:duj_pet} with parameters
\begin{align*}
    &w:=n,\quad A:=11.77/\log\alpha,\quad B:=\alpha,\\ & M:=6.55\cdot 10^{16}>1.3n\geq 1.3(n-m+1)>d,\\
    &\tau:=\frac{\log b}{\log \alpha},\quad \mu:=\frac{\log(P_m/(1/2\sqrt{2}-\alpha^{k-n}))}{\log\alpha}.
\end{align*}
Let's denote with $q_t$ the denominator of the $t$-th convergent of the continued fraction of $\tau$. We implemented the algorithm of Lemma \ref{lemma:duj_pet} in Wolfram Mathematica and for $2\leq b\leq 10$, $1\leq m\leq 100$ and $3\leq n-k\leq 104$ we got the largest numerical bounds for $w=n$ for the following data.
\begin{table}[h!]
 \resizebox{\textwidth}{!}{%
\begin{tabular}{|c|c|c|c|c|c|c|c|c|c|c|}
    \hline
    $b$ & $2$& $3$& $4$&$5$&$6$&$7$&$8$&$9$&$10$  \\
    \hline 
    $m$ & $27$&$26$&$15$&$19$&$22$&$1$&$16$&$27$&$19$\\
    \hline
    $n-k$& $58$&$53$&$46$&$28$&$58$&$41$&$47$&$55$&$33$\\
    \hline
    $q_t$& $q_{37}$& $q_{33}$& $q_{43}$& $q_{34}$& $q_{27}$& $q_{35}$& $q_{39}$& $q_{31}$& $q_{43}$\\
    \hline
    $\epsilon>$& $ 8\cdot 10^{-6}$&$4\cdot 10^{-5}$&$0.002$&$ 10^{-4}$&$10^{-4}$&$ 10^{-4}$&$ 2\cdot10^{-4}$&$6\cdot 10^{-4}$&$10^{-4}$\\
    \hline
    $n\leq $&$64$&$62$&$57$&$61$&$55$&$61$&$59$&$58$&$59$\\
    \hline
\end{tabular}
}
 % \caption{}\label{tab:table_1}
\end{table}

Notice that we have $n\leq 64$ in any case. To investigate these remaining cases, we have written a short computer program to search the parameters
$n, m$ and $k$ that satisfy \eqref{eq:first}. We have set the ranges as follows: $0\leq k\leq n-2\leq 62$, $1\le m\le 100$, and $m+1\leq n\leq 64$. The program only yields solutions as described in Theorem~\ref{Appl1}.
\end{proof}

\section{The second equation $P_n=b^dQ_m+P_k$}

\subsection{Proof of Theorem~\ref{tm:second}}

We start by establishing some useful inequalities. First, we have from \eqref{eq:inequalities_basic} that
\[ 
d = \lfloor\log_{b}P_{k}\rfloor+1 \le 1 + \log_{b}P_{k} < 1+ \log_{b}(\alpha^{k-1})= 1+ (k-1)\frac{\log\alpha}{\log b}
\]
and
\[ 
d = \lfloor\log_{b}P_{k}\rfloor+1 >  \log_{b}P_{k} \geq  \log_{b}(\alpha^{k-2})= (k-2)\frac{\log\alpha}{\log b}.
\]
We deduce that
\begin{equation}\label{Eq11}
(k-2)\frac{\log\alpha}{\log b}< d < 1+ (k-1)\frac{\log\alpha}{\log b}.
\end{equation}
Second, we have
\[
P_{k}=b^{\log_{b}P_{k}}< b^{d} \leq b^{1 + \log_{b}P_{k}}=b\cdot b^{\log_{b}P_{k}}=b\cdot P_{k},
\]
which leads to
\begin{equation} \label{Eq2}
P_{k}<b^{d}\le b\cdot P_{k}.
\end{equation}
By \eqref{Eq2}, we can easily see that
\[
\alpha^{n-2}\le P_{n}=b^{d}Q_{m}+P_{k}\leq b\cdot P_{k}\cdot Q_{m}+P_{k}<(b+1)\cdot Q_{m}\cdot P_{k+1}.
\]
Moreover, we have 
\[
(b+1)\cdot Q_{m}\cdot P_{k+1}< (b+1)\cdot \alpha^{m+1}\cdot\alpha^k=(b+1)\alpha^{m+k+1}=\alpha^{\log_{\alpha}(b+1)}\cdot\alpha^{m+k+1}
\]
and then
\begin{align}\label{n_1}
\alpha^{n-2}<\alpha^{m+k+1+\log_{\alpha}(b+1)}.
\end{align}
Also, we get
\[
\alpha^{n-1}\ge P_{n}=b^{d}Q_{m}+P_{k}> P_{k}\cdot Q_{m}+P_{k}> P_{k}\cdot Q_{m}\ge \alpha^{m+k-4},
\]
which becomes
\begin{align}\label{n_2}
\alpha^{n-1}>\alpha^{m+k-4}.
\end{align}
Combining inequalities \eqref{n_1} and \eqref{n_2}, we obtain
\begin{equation}\label{Equ3}
 m+k-3< n < m + k +3+\dfrac{\log (b+1)}{\log\alpha}.
\end{equation}
Since $m\ge 0$ we can deduce from \eqref{Eq2} that 
$$
P_n=b^dQ_m+P_k>2b^d+P_k>3P_k
$$
and therefore 
\begin{align}
n-k\ge 1,
\end{align}
where you use inequalities \eqref{eq:inequalities_basic}. From \eqref{eqn:binet} we rewrite the Diophantine equation \eqref{eq:second}
as
\[ 
\frac{\alpha^{n}-\beta^{n}}{2\sqrt{2}} = \left( \alpha^{m} + \beta^{m} \right)b^{d}+P_{k},
\]
which leads to
$$
\frac{\alpha^{n}}{2\sqrt{2}}-\alpha^{m}b^{d} = \frac{\beta^{n}}{2\sqrt{2}}+\beta^{m} b^{d} + P_{k}
$$
and therefore we have
$$
1-\dfrac{2\sqrt{2}\cdot b^d}{\alpha^{n-m}}= \frac{\beta^{n}}{\alpha^n}+\dfrac{2\sqrt{2}\cdot\beta^{m}\cdot b^{d}}{\alpha^n} + \dfrac{2\sqrt{2}\cdot P_{k}}{\alpha^n}
$$
Taking absolute values of both sides of the above equality, we get that
$$
\left|1-\dfrac{2\sqrt{2}\cdot b^d}{\alpha^{n-m}}\right|< \dfrac{1}{\alpha^{2n}}+\dfrac{2\sqrt{2}\cdot b^d}{\alpha^{n+m}}+\dfrac{2\sqrt{2}\cdot P_k}{\alpha^n}.
$$
Since $P_{k}<b^{d}<b\cdot P_{k}$ and using \eqref{eq:inequalities_basic} with $2\sqrt{2}<\alpha^2,$  we obtain the following estimates
\begin{align*}
\left| 1-\frac{2\sqrt{2}\cdot b^d}{\alpha^{n-m}}\right| & <  \frac{1}{\alpha^{2n}}+\frac{b\cdot\alpha^{k-1}\cdot 2\sqrt{2}}{\alpha^{n+m}} + \frac{2\sqrt{2}\cdot \alpha^{k-1}}{\alpha^{n}}\\
 & <  \frac{1}{\alpha^{2n}}+\frac{\alpha\cdot b}{\alpha^{n+m-k}} + \frac{\alpha}{\alpha^{n-k}}\\
 & <  \frac{1}{\alpha^{n-k}}+\frac{\alpha\cdot b}{\alpha^{n-k}} + \frac{\alpha}{\alpha^{n-k}}=\dfrac{1+\alpha\cdot b+\alpha}{\alpha^{n-k}}.
\end{align*}
Note that for $b\ge 2,$ we have $1+\alpha\cdot b+\alpha<2\alpha\cdot b.$
Thus, we obtain 
\begin{equation}\label{upper_La}
|\Lambda_3|:=\left| 1 - \frac{2\sqrt{2}\cdot b^{d}}{\alpha^{n-m}}\right| < \frac{2b}{\alpha^{n-k-1}}.
\end{equation}
To apply Theorem~\ref{tm:BMS}, we take
$$
\begin{array}{lcll}
\gamma_{1}=b, & \gamma_{2}=2\sqrt{2}, & \gamma_{3}=\alpha,\\
b_{1}=d, & b_{2}=1, & b_{3}=-(n-m).
\end{array}
$$ 
Assuming that $\Lambda_3=0$, we get 
\begin{align}\label{nonzero}
\alpha^{n-m}=2\sqrt{2}\cdot b^{d}.
\end{align}
 Taking the norm in $\mathbb{Q}(\sqrt{2})$ of both sides of \eqref{nonzero}, we get
$\pm 1= 8b^{2d}$,  which is impossible. Then $\Lambda_3 \ne 0$.
We know that $\gamma_{1}, \gamma_{2}, \gamma_{3}$ are elements of  $\mathbb{L}=\mathbb{Q}(\sqrt{2})$ and $D:=[\mathbb{L}: \mathbb{Q}]=2.$ Using the properties of the function $h(\cdot)$, we get
$$ 
h(\gamma_{1})=h(b)=\log b, \quad h(\gamma_{2})=h(2\sqrt{2})=h(\sqrt{8})=\frac{1}{2}\log8
$$
and  
$$
h(\gamma_{3})=h(\alpha)=\frac{1}{2}\log\alpha.
$$ 
With the notations from Theorem~\ref{tm:BMS} we can take the following parameters
$$
A_{1}=2\log b,\quad A_{2}=\log8,\quad \text{and}\quad A_{3}=\log\alpha.
$$
By 
$P_{n}=b^{d}Q_{m}+P_{k}$ and dividing its both sides  by $Q_m,$ we get 
$$
b^d<  b^{d}+ \frac{P_{k}}{Q_{m}}=\frac{P_{n}}{Q_{m}}\le \alpha^{n-m+1},
$$
and therefore 
$$
d <(n-m+1)\cdot\frac{\log\alpha}{\log b}\le (n-m+1)\frac{\log\alpha}{\log 2}.
$$
Because $b\ge 2,$ we get
\begin{align}\label{bound-d}
d <  1.3\cdot (n-m+1).
   \end{align}
Using the fact that
$$
\max\lbrace \vert b_{1}\vert, \vert b_{1}\vert, \vert b_{3}\vert\rbrace = \max\lbrace d; 1; n-m\rbrace<1.3\cdot (n-m+1),
$$ 
we can take $B=1.3\cdot (n-m+1)$. Since in this case three algebraic numbers are considered, then $s=3.$ Applying Theorem~\ref{tm:BMS} to $\Lambda_3$ leads to
\begin{align*}
\log |\Lambda_3|&>-1.4\cdot 30^6\cdot 3^{4.5}\cdot 2^{2}\cdot (1+\log 2)\cdot (1+\log (1.3(n-m+1)))\\
&\times 2\log b\cdot \log 8\cdot \log\alpha.
\end{align*}
Combining this with \eqref{upper_La}, we see that
\begin{equation}\label{Equ6}
 n-k-1<4.1\cdot 10^{12}\cdot\log b\cdot (1+\log (1.3(n-m+1))),
\end{equation}
which is valid for $b\ge 2.$ The next step is to rewrite the Diophantine equation \eqref{eq:second} in the form 
$$
\frac{\alpha^{n}-\beta^{n}}{2\sqrt{2}}=b^{d}Q_{m} + \frac{\alpha^{k}-\beta^{k}}{2\sqrt{2}},
$$
which also leads to
$$
\dfrac{\alpha^n}{2\sqrt{2}}\cdot\left(1 - \alpha^{k-n}\right)- b^{d}Q_{m} = \frac{\beta^{n}}{2\sqrt{2}} -\frac{\beta^{k}}{2\sqrt{2}}.
$$
Taking absolute value of both sides of the above equality and dividing by $\dfrac{\alpha^n}{2\sqrt{2}}\left(1-\alpha^{k-n} \right)$, we get 
$$
\left| 1 - \frac{2\sqrt{2}\cdot b^{d}\cdot Q_{m}}{\alpha^{n}\left(1 - \alpha^{k-n}\right)} \right|< \dfrac{1}{1- \alpha^{k-n}}\cdot \left(\dfrac{|\beta|^n}{\alpha^n}+\dfrac{|\beta|^k}{\alpha^n} \right).
$$
Moreover, we have
\begin{align*}
\left| 1 - \frac{2\sqrt{2}\cdot b^{d}\cdot Q_{m}}{\alpha^{n}\left(1- \alpha^{k-n}\right)} \right| & <  \frac{1}{1 - \alpha^{k-n}}\cdot \left(\frac{1}{\alpha^{2n}} + \frac{1}{\alpha^{n+k}}\right)\\ 
 & =  \frac{\alpha^{n-k}}{\alpha^{n-k}-1}\cdot \left(\frac{1}{\alpha^{2n}} + \frac{1}{\alpha^{n+k}}\right)\\
 & <  \frac{\alpha^{n-k}}{\alpha^{n-k}-1}\cdot \left( \frac{2}{\alpha^{n}} \right).
\end{align*} 
The above inequalities hold because for $n-k\ge 1,$ we have  $n> k.$ Note also that if $n-k\ge 1,$ then
$$
0<\frac{\alpha^{n-k}}{\alpha^{n-k}-1}<1.71,
$$
and therefore
\begin{align}\label{upper_La2}
|\Lambda_4|:=\left| 1 - \frac{2\sqrt{2}\cdot b^{d}\cdot Q_{m}}{\alpha^{n}\left(1- \alpha^{k-n}\right)}  \right| <  \frac{3.42}{\alpha^{n}}.
\end{align}
Thus, we have to apply Theorem~\ref{tm:BMS} to \eqref{upper_La2} by taking the following data
$$
\begin{array}{lcll}
\gamma_{1}=b, & \gamma_{2}=\alpha, & \gamma_{3}=\dfrac{2\sqrt{2}\cdot Q_m}{1-\alpha^{k-n}},\\
b_{1}=d, & b_{2}=-n, & b_{3}=1.
\end{array}
$$
Note that $\Lambda_4\ne 0.$ To see this we can follow the same idea for the proof of $\Lambda_2\neq 0.$ The heights of the algebraic numbers $\gamma_1, \gamma_2$ and $\gamma_3$ are defined respectively by 
$$
h(\gamma_{1})=\log b, \quad h(\gamma_{2})=\frac{1}{2}\log \alpha 
$$ 
and  
\begin{align*}
h(\gamma_{3}) & =  h\left(\frac{2\sqrt{2}\cdot Q_{m}}{1 - \alpha^{k-n}}\right)  \le h (2\sqrt{2})+  h(Q_{m}) + h\left(1 - \alpha^{k-n}\right)\\
 &  \le  \log Q_{m} + h (2\sqrt{2}) + h(\alpha^{k-n}) + \log 2\\
 &  <  (m+1)\log\alpha + \frac{1}{2}\log 8 + \frac{n-k}{2}\log\alpha +\log 2\\
 &=(m+1)\log\alpha + \frac{5}{2}\log 2 + \frac{n-k}{2}\log\alpha.
 \end{align*}
Using inequalities \eqref{Equ3}, we get that  $m+1< (n-k)+4.$ It follows that  
$$
h(\gamma_{3})< \frac{3(n- k) + 8}{2}\log \alpha + \frac{5}{2}\log 2.
$$
Then, we can take 
$$
A_1=2\log b,\quad A_3=\log\alpha\quad \text{and}\quad
A_{3}= (3(n-k)+8)\log \alpha + 5 \log 2.
$$ 
As $B\ge  \max\lbrace \vert b_{i}\vert\rbrace = \max\lbrace d, 1, n\rbrace $ and $d<1.3\cdot (n-m+1)<1.3(n+1),$ then we can take $B=1.3(n+1).$
Also, in this case $s=3.$  Thus, combining \eqref{upper_La2} with Theorem~\ref{tm:BMS} we see that
\begin{align*}
n\log\alpha-\log 3.42 &<1.4\cdot 30^{6}\cdot 3^{4.5}\cdot 2^{2}\cdot (1+\log 2)\cdot (1+\log (1.3(n+1))) \\
&\times  2\log b\cdot \log\alpha\cdot\left( (3(n-k)+8)\log \alpha + 5 \log 2\right),
\end{align*}
which can be reduce to
\begin{align}\label{EEqu6}
n < 2\cdot 10^{12}\cdot (1+\log (1.3(n+1)))\cdot\left[(3(n-k)+8)\log\alpha + 5 \log 2\right]\cdot\log b.
\end{align}
So, from inequality \eqref{Equ6}, we get that
\begin{align*}
C & =  (3(n-k)+8)\log\alpha + 5 \log 2\\
 & =  3(n-k)\log\alpha + 8\log\alpha + 5 \log 2\\
  & <  4.1\cdot 10^{12}\cdot 3\log\alpha\cdot (1+\log (1.3(n+1)))\cdot\log b + 11\log\alpha  + 5 \log 2\\
  & <  1.2\cdot 10^{13}\cdot (1+\log (1.3(n+1)))\cdot\log b.
 \end{align*}
Inserting this in \eqref{EEqu6}, we get
$$ 
n< 2.4\cdot 10^{25}\cdot (1+\log (1.3(n+1)))^{2} \cdot\log^{2} b.
$$
Moreover, we have $1+\log (1.3(n+1))<3\log (n+1)$ which is valid for all $n\ge 1.$ 
It follows that 
$$
n+1<2.2\cdot 10^{26}\cdot\log^2 (n+1)\cdot \log^2 b.
$$
So, to get an upper bound of $n$ in terms of $b,$ we need to refer to Lemma~\ref{lemma:supporting} by putting 
$$
l=2,\quad L=n+1\quad\text{ and}\quad H=2.2\cdot 10^{26}\cdot\log^{2} b.
$$ 
Then, Lemma~\ref{lemma:supporting} tell us that 
$$
n+1<2^2\cdot 2.2\cdot 10^{26}\cdot\log^{2} b\cdot (60.7+2\log\log b)^2 .
$$
Since for $b\ge 2,$ we have $60.7+2\log\log b<73\log^{1/2} b,$ then 
\begin{align}\label{nn2}
n<4.7\cdot 10^{30}\cdot \log^3 b.
\end{align}
 This completes the proof of Theorem~\ref{tm:second}.

%%%%%%%%%%%%%%%%%%%%%%%%%%%%%%%%%%%%%%%%%%%%%%%%%%%%%%%%%%%
\subsection{Application for $2\le b\le 10$}
%%%%%%%%%%%%%%%%%%%%%%%%%%%%%%%%%%%%%%%%%%%%%%%%%%%%%%%%%%%

Our result in this case is the following.

\begin{theorem}\label{Appl2}
Let $b$ be a positive integer such that $2\le b\le 10.$ Then, the numbers $5, 12, 29$ and $70$ are the only Pell numbers that satisfy the Diophantine equation \eqref{eq:second}. More precisely, we have the following representations:
\[
\begin{array}{ccccl}
5&=&P_3&=&2^1\cdot Q_0+P_1=2^1\cdot Q_1+P_1,\\
12&=&P_4&=&5^1\cdot Q_0+P_2=5^1\cdot Q_1+P_2=2^1\cdot Q_2+P_0,\\
12&=&P_4&=&6^1\cdot Q_0+P_0=6^1\cdot Q_1+P_0,\\
29&=&P_5&=&2^1\cdot Q_3+P_1,\\
70&=&P_6&=&5^1\cdot Q_3+P_0.
\end{array}
\]
\end{theorem}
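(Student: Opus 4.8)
The plan is to mirror the reduction used for Theorem~\ref{Appl1}, starting from the absolute ceiling $n<4.7\cdot 10^{30}\log^3 b$ provided by Theorem~\ref{tm:second}, which for $2\le b\le 10$ is an explicit (astronomically large) number, and hence via \eqref{bound-d} an explicit ceiling $M$ on $d$. The one structural novelty compared with the first equation is the factor $2\sqrt 2$ carried by $\Lambda_3$: whereas for equation~\eqref{eq:first} the first linear form was the homogeneous two-term expression $d\log b-(n-m)\log\alpha$, amenable to Legendre's criterion (Lemma~\ref{lemma:legendre}), here the analogous form carries the constant $\log(2\sqrt 2)$ and is genuinely inhomogeneous. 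I would therefore drive both reduction rounds with the Dujella--Peth\H{o} lemma (Lemma~\ref{lemma:duj_pet}); the nonzero shift $\log(2\sqrt 2)$ is in fact an advantage, since it lets the Dujella--Peth\H{o} test produce $\varepsilon>0$ and thereby bound $n-k$ uniformly in $m$, so that no separate large-$m$ contradiction (of the kind Legendre forced in Theorem~\ref{Appl1}) is required.

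For the first round, set $\Gamma_3=d\log b+\log(2\sqrt 2)-(n-m)\log\alpha$, so $\Lambda_3=e^{\Gamma_3}-1$. For $n$ large enough that the right-hand side of \eqref{upper_La} is below $1/2$, the elementary implication $|e^x-1|<y\Rightarrow|x|<2y$ turns \eqref{upper_La} into $|\Gamma_3|<4b/\alpha^{n-k-1}$; dividing by $\log\alpha$ gives
$$0<\left|d\,\frac{\log b}{\log\alpha}-(n-m)+\frac{\log(2\sqrt 2)}{\log\alpha}\right|<\frac{4b/\log\alpha}{\alpha^{\,n-k-1}}.$$
I would feed this to Lemma~\ref{lemma:duj_pet} with $\tau=\log b/\log\alpha$, $\mu=\log(2\sqrt 2)/\log\alpha$, $A=4b/\log\alpha$, $B=\alpha$, $w=n-k-1$, the integer $d$ taking the role of the lemma's variable $m$, and $M$ the ceiling on $d$. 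For each $b\in\{2,\dots,10\}$ one selects the first convergent $p_t/q_t$ of $\tau$ with $q_t>6M$, checks $\varepsilon=\|\mu q_t\|-M\|\tau q_t\|>0$, and reads off $n-k-1<\log(Aq_t/\varepsilon)/\log\alpha$, an explicit bound of order one hundred on $n-k$. Because \eqref{Equ3} forces $m<n-k+3$, this bound on $n-k$ simultaneously bounds $m$.

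With $n-k$ now bounded by an explicit constant, substituting it into the bracket of \eqref{EEqu6} collapses that inequality to $n<(\mathrm{const})\bigl(1+\log(1.3(n+1))\bigr)$, hence $n<N_1$ with $N_1$ of order $10^{16}$. For the second round I would put $\Gamma_4=d\log b-n\log\alpha+\log\gamma_3$ with $\gamma_3=2\sqrt 2\,Q_m/(1-\alpha^{k-n})$, turn \eqref{upper_La2} into $|\Gamma_4|<6.84/\alpha^{n}$ and then into
$$0<\left|d\,\frac{\log b}{\log\alpha}-n+\frac{\log\gamma_3}{\log\alpha}\right|<\frac{6.84/\log\alpha}{\alpha^{\,n}},$$
and apply Lemma~\ref{lemma:duj_pet} again, now with $w=n$, $M=1.3(N_1+1)$, and $\mu=\log\gamma_3/\log\alpha$. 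Since $\gamma_3$ depends on $m$ and on $n-k$, this is a finite double loop over the already bounded ranges $0\le m<n-k+3$ and $1\le n-k$; each pair yields a bound $n<\log(Aq_t/\varepsilon)/\log\alpha$, and one expects the worst of these to lie below roughly $65$. A direct computer search over $n$ up to that ceiling, $0\le k\le n-1$, and the matching small range of $m$ should then return exactly the solutions listed.

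The hard part will not be any single estimate but the bookkeeping of the reduction: one must verify $\varepsilon>0$ for the chosen convergent in all nine base cases of the first round and, in the second round, for every admissible pair $(m,n-k)$, passing to a larger convergent (or shrinking $M$) whenever the test fails. Minor care is also needed at the edges, namely excluding by hand the degenerate small-$n$ regime where $|\Lambda|<1/2$ may fail, and recording that $\Gamma_3\neq0$ and $\Gamma_4\neq0$ — which is exactly the nonvanishing of $\Lambda_3$ and $\Lambda_4$ already proved. Beyond these finite verifications, the argument is a routine transcription of the first-equation reduction.
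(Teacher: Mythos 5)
Your proposal follows essentially the same route as the paper: a first Dujella--Peth\H{o} reduction applied to $\Gamma_3=d\log b-(n-m)\log\alpha+\log(2\sqrt2)$ to bound $n-k$ (and hence $m$ via \eqref{Equ3}, which the paper phrases as a contradiction for $m>100$), a second reduction on $\Gamma_4$ over the finite range of $(m,n-k)$ to get $n$ below roughly $60$, and a final brute-force search; your observation that the inhomogeneous term $\log(2\sqrt2)$ rules out Legendre's criterion and forces Lemma~\ref{lemma:duj_pet} in the first round matches exactly what the paper does. The proposal is correct, with all constants and bounds consistent with the paper's.
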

\begin{proof}
First, let us prove unconditionally that if equation \eqref{eq:second} is valid then $m\le  100.$ For that we assume the opposite i.e. $m > 100.$ Put
\begin{align}\label{Ga2}
\Gamma_3:=d\log b-(n-m)\log\alpha+\log (2\sqrt{2}).
\end{align}
Because $96<m-4<n-k-1$ and $2\le b\le 10,$ we deduce from \eqref{upper_La}
$$
|\Lambda_3|:=|1-e^{\Gamma_3}|<\dfrac{2b}{\alpha^{n-k-1}}<\dfrac{1}{2}.
$$
It follows that
$$
|\Gamma_3|<\dfrac{4b}{\alpha^{n-k-1}}.
$$
Combining this with \eqref{Ga2}, we have that
\begin{align}\label{Re2}
0<\left|d\dfrac{\log b}{\log\alpha}-(n-m)+\dfrac{\log (2\sqrt{2})}{\log\alpha} \right|<\dfrac{4b/\log\alpha}{\alpha^{n-k-1}}.
\end{align}
Since $2\le b\le 10,$ we have $n<4.7\cdot 10^{30}\log^3 b<5.74\cdot 10^{31}$ and then
$$
d<1.3(n-m+1)<1.3(n+1)<7.5\cdot 10^{31}.
$$ 
Thus, we can apply Lemma~\ref{lemma:duj_pet} to \eqref{Re2} with the following data: 
$$M:=7.5\cdot 10^{31},\ A:=4b/\log\alpha,\ B:=\alpha,\ w:=n-k-1,$$
$$
\tau:=\dfrac{\log b}{\log\alpha}\quad \text{and}\quad \mu:=\dfrac{\log (2\sqrt{2})}{\log\alpha}.
$$
Let $q_t$ be the denominator of the $t$-th convergent of the continued fraction of $\tau.$ We apply Lemma~\ref{lemma:duj_pet} with Wolfram Mathematica and we got the following results:
\begin{center}
	\begin{tabular}{|c|c|c|c|c|c|c|c|c|c|}
		\hline $b$ & $ 2 $ & $ 3 $ & $ 4 $ & $ 5 $ & $ 6 $ & $ 7 $ & $ 8 $ & $ 9 $ & $10$ \\ \hline $q_{t}$ & $q_{65}$ &  $q_{64}$ & $q_{72}$ & $q_{63}$ & $q_{53}$ & $q_{62}$ & $q_{63}$ & $q_{58}$ & $q_{73}$     \\ \hline
		$n-k-1< $ & $89$ & $90 $ & $90 $ & $90 $ & $93 $ & $90$ & $91 $ & $91 $ & $92$\\ \hline 
	$\epsilon> $ & $0.45$ & $0.24 $ & $0.45 $ & $0.40$ & $0.43 $ & $0.45$ & $0.49 $ & $0.24 $ & $0.17$\\ \hline 	
	\end{tabular}\\	
\end{center} 
In all cases we conclude according to the values of $b$ that $n-k-1< 93,$  which contradicts the fact that $96<m-4<n-k-1$. Therefore, we have $m\le 100.$ Secondly, by \eqref{Equ3} we get that
$$
n-k\le m+3+\dfrac{\log (b+1)}{\log\alpha}\le 106.
$$
Substituting the upper bound for $n -k$ into \eqref{EEqu6} and using the fact that $2\le b\le 10$, we see that
$$
n< 1.34\cdot 10^{15} \cdot (1+\log (1.3(n+1))),
$$
which also implies that $n<5.33\cdot 10^{16}.$
Let 
\begin{align}
\Gamma_4:=d\log b-n\log\alpha+\log \left(\dfrac{2\sqrt{2}\cdot Q_m}{1-\alpha^{k-n}} \right)
\end{align}
so that
$$
|\Lambda_4|:=|e^{\Gamma_4}-1|<\dfrac{3.42}{\alpha^{n}}.
$$
Note that for $n\ge 3,$ we have $3.42/\alpha^{n}<1/2$ which also implies that 
\begin{align}\label{Gamm2}
|\Gamma_4|<\dfrac{6.84}{\alpha^{n}}.
\end{align}
Therefore, dividing both sides of \eqref{Gamm2} by $\log\alpha,$ we get that 
\begin{align*}
0<\left|d\dfrac{\log b}{\log\alpha}-n+\dfrac{\log \left(2\sqrt{2}\cdot Q_m/(1-\alpha^{k-n}) \right)}{\log\alpha} \right|<\dfrac{6.84/\log\alpha}{\alpha^{n}}.
\end{align*}
Next we take the following parameters $$w:=n,\ A:=6.84/\log\alpha,\ B:=\alpha,\ M:=7\cdot 10^{16}>1.3(n+1)>d,$$
$$
\tau:=\dfrac{\log b}{\log\alpha}\quad \text{and}\quad \mu:=\dfrac{\log \left(2\sqrt{2}\cdot Q_m/(1-\alpha^{k-n}) \right)}{\log\alpha}
$$
in view to apply Lemma~\ref{lemma:duj_pet}. Let $q_t$ be the denominator of the $t$-th convergent of the continued fraction of $\tau.$ Now, using Wolfram Mathematica we apply Lemma~\ref{lemma:duj_pet} and for $2\le b\le 10$, $0\le m\le 100$ and $1\le n-k\le 106$ we got the largest numerical bounds for $w=n$ for the following data.
\begin{table}[h!]
\resizebox{\textwidth}{!}{%
	\begin{tabular}{|c|c|c|c|c|c|c|c|c|c|}
		\hline $b$ & $ 2 $ & $ 3 $ & $ 4 $ & $ 5 $ & $ 6 $ & $ 7 $ & $ 8 $ & $ 9 $ & $10$ \\ \hline $q_{t}$ & $q_{37}$ &  $q_{33}$ & $q_{43}$ & $q_{34}$ & $q_{27}$ & $q_{35}$ & $q_{39}$ & $q_{31}$ & $q_{43}$     \\ \hline
		$ m$& $28$ & $20$& $7$& $30$& $12$& $27$& $3$& $18$& $8$ \\ \hline
		$ n-k$& $54$& $48$& $53$& $18$& $41$& $38$& $9$& $38$& $3$\\ \hline
		$n < $ & $61$ & $60 $ & $60$ & $60 $ & $60 $ & $58 $ & $57 $ & $59 $ & $60$\\ \hline 
	$\epsilon> $ & $10^{-4}$ & $10^{-3} $ & $9\cdot 10^{-5} $ & $2\cdot 10^{-4} $ & $9\cdot 10^{-5} $ & $10^{-4} $ & $7\cdot 10^{-4} $ & $2\cdot 10^{-4} $ & $3\cdot 10^{-5}$\\ \hline 	
	\end{tabular}
 }
\end{table} 

 Thus we can deduce that $n<61$ holds in all cases. Next, using \eqref{Equ3} we get $m+k<64.$ Finally, we have written a short computer program to search the parameters
$d, n, m$ and $k$ that satisfy \eqref{eq:second} in the range $0\le m\le 100$ and we found only the Pell numbers given in the statement of Theorem~\ref{Appl2}. This completes the proof. 
\end{proof}

\section*{Acknowledgements}
The authors are grateful to the referee for the useful remarks and suggestions to improve
the quality of this paper. The first  author is supported by IMSP, Institut de Math\'ematiques et de Sciences Physiques de l'Universit\'e d'Abomey Calavi.


\begin{thebibliography}{99}


\bibitem{nat1} K.~N.~Ad\'edji, M.~N.~Faye, A.~Togb\'e, {\it On the Diophantine equations $P_n=b^dQ_m+Q_k$ and $Q_n=b^dP_m+P_k$ involving Pell and Pell-Lucas numbers}, Proc. Math. Sci. \textbf{134}, 14 (2024), https://doi.org/10.1007/s12044-024-00784-4

\bibitem{nat} K.~N.~Ad\'edji, M.~N.~Faye, A.~Togb\'e, {\it A generalization of a Theorem of Murat Alan}, Rad Hrvat. Akad. Znan. Umjet. Mat. Znan., accepted to appear.

\bibitem{AFT:2023} 
K. N. Ad\'edji, A. Filipin and A. Togb\'e, \emph{Fibonacci and Lucas numbers as products of three repdigits in base $g$}, Rend. Circ. Mat. Palermo, II. Ser, \textbf{72}, (2023), 4003--4021. %https://doi.org/10.1007/s12215-023-00878-4.

\bibitem{Alan:2022}  
M. Alan, \emph{On Concatenations of Fibonacci and Lucas Numbers}, Bull. Iran.
Math. Soc., \textbf{48}, (2022), 2725--2741.


\bibitem{mixed} A.~Altassan, M.~Alan, {\it On Mixed Concatenations of Fibonacci and Lucas Numbers Which are Fibonacci Numbers}, (2022), preprint

\bibitem{bd} A.~Baker and H.~Davenport, {\it The equations $3x^2 - 2 = y^2$ and $8x^2 - 7 = z^2$}, Quart. J. Math. Oxford Ser. (2) \textbf{20} (1969), 129--137.
% \todo{Remove if not used.}

\bibitem{Banks:2005} 
W. D. Banks, F. Luca, \emph{Concatenations with binary recurrent sequences}, J. Integer Seq. \textbf{8 (05)} (2005), 1--3.

\bibitem{BGF}  
J. J. Bravo, C. A. G\'omez  and F. Luca, \emph{Powers of two as sums of two $k$-Fibonacci numbers}, Miskolc Math. Notes  \textbf{17}, (2016), 85--100.

\bibitem{BMS:2006} 
Y. Bugeaud, M. Mignotte and S. Siksek, \emph{ Classical and modular approaches to exponential Diophantine equations I. Fibonacci and Lucas powers}, Annals of Mathematics. \textbf{163} (2) (2006), 969--1018.

\bibitem{Dujella-Peto} 
A. Dujella and A. Peth\H o, \emph{A generalization of a theorem of Baker and Davenport}, Quart. J. Math. Oxf. Ser. (2) \textbf{49} (1998), 291--306.


\bibitem{SGL} 
 S. Guzm\'an and F. Luca, \emph{Linear combinations of factorials and $s$-units in a binary recurrence sequence}, Ann. Math. Qu\'{e}. \textbf{38} (2014), 169--188.



\end{thebibliography}
\end{document}